\newtheorem{theorem}{Theorem}[section]
\newtheorem{proposition}[theorem]{Proposition}
\newtheorem{lemma}[theorem]{Lemma}
\newtheorem*{theorem*}{Theorem}
\theoremstyle{definition}
\theoremstyle{remark}
\newtheorem{remark}[theorem]{Remark}
\numberwithin{equation}{section}
\newcommand{\frakg}{{\mathfrak g}}
\newcommand{\bel}{\begin{equation}\label}
\newcommand{\ee}{\end{equation}}
\newcommand{\tpitchfork}{%
  \vbox{
    \baselineskip\z@skip
    \lineskip-.52ex
    \lineskiplimit\maxdimen
    \m@th
    \ialign{##\crcr\hidewidth\smash{$-$}\hidewidth\crcr$\pitchfork$\crcr}
  }%
}
\begin{document}

\openup1pt


\title
{Inverse spectral results for non-abelian group actions}

\author{Victor Guillemin}\address{Department of Mathematics, Massachusetts Institute of Technology, Cambridge, MA 02139, USA}\email{vwg@math.mit.edu}
\author{Zuoqin Wang}\address{School of Mathematical Sciences, University of Science and Technology of China, Hefei, Anhui 230026, P.R.China}\email{wangzuoq@ustc.edu.cn}

\begin{abstract}
In this paper we will extend to non-abelian groups inverse spectral results, proved by us in an earlier paper, for compact abelian groups, i.e. tori. More precisely,
Let $\mathsf G$ be a compact Lie group acting isometrically on a compact Riemannian manifold $X$. We will show that for the Schr\"odinger operator $-\hbar^2 \Delta+V$ with $V \in C^\infty(X)^{\mathsf  G}$, the potential function $V$ is, in some interesting examples, determined by the $\mathsf G$-equivariant spectrum. The key ingredient in this proof is a generalized Legendrian relation between the Lagrangian manifolds $\mathrm{Graph}(dV)$ and $\mathrm{Graph}(dF)$, where $F$ is a spectral invariant defined on an open subset of the positive Weyl chamber.
\end{abstract}

\thanks{Z. W. is supported by NSFC 11926313, 11721101 and 11571331}
\maketitle

\section{Introduction}

Let $\mathsf G$ be a compact connected Lie group and $\tau: \mathsf G \times X \to X$ an action of $\mathsf G$ on a compact manifold $X$. We will be concerned in this paper with spectral properties of $\mathsf G$-equivariant pseudo-differential operator, $P: C^\infty(X) \to C^\infty(X)$. More explicitly if $P$ is a semi-classical elliptic pseudo-differential operator of order zero and is self-adjoint, then for $f \in C_0^\infty(\mathbb R)$, $f(P)$ is a well-defined smoothing operator and we will be concerned with the spectral invariants
\begin{equation}\label{equiSpecInv}
m(f, g) = \mathrm{trace}\ \tau_g^*f(P).
\end{equation}
To analyze these invariants let $\mathsf{T}$ be a Cartan subgroup of $\mathsf G$ , $\mathfrak t$ its Lie algebra, $\mathfrak t_+^*$ the positive Weyl chamber in $\mathfrak t^*$ and $\mu \in \mathfrak t_+^*$ an integral weight. Then the representation of $\mathsf G$ on $L^2(X)$ decompose into subspaces $L^2(X)_\mu$, where the representation of $\mathsf G$ on $L^2(X)_\mu$ is the sum of irreducible representations of $\mathsf G$ with highest weight $\mu$. Let $\chi_\mu(g)$ be the character of this representation. Then the spectral invariants (\ref{equiSpecInv}) can be read off from the spectral measures
\begin{equation}\label{equivSpecMea}
m_\mu(f) = \mathrm{trace}\ \int \tau_g^* f(P)\chi_\mu(g)dg
\end{equation}
and we will be concerned below with the asymptotic properties of these measures: i.e. for $\hbar=\frac 1k$, $k$ being a positive integer, the asymptotic of
\begin{equation}\label{equivSpecMeak}
m_{k\mu}(f) = \mathrm{trace}\ \int \tau_g^* f(P_\hbar)\chi_{k\mu}(g)dg
\end{equation}
as $\hbar \to 0$ where $\hbar \mapsto P_\hbar$ is the dependence of the semi-classical operator $P$ on $\hbar$.

More explicitly the action, $\tau$, of $\mathsf G$ on $X$ lifts to a Hamiltonian action of $\mathsf G$ on $T^*X$ and modulo assumptions on $\mu$ and $\tau$ (which we will spell out in \S 2) the symplectic reduction, $(T^*X)_\mu$, is well-defined. Moreover, from the symbol, $\mathsf p$, of $P$ one gets a reduced symbol
\begin{equation}\label{pmu}
\mathsf p_\mu: (T^*X)_\mu \to \mathbb R
\end{equation}
and the asymptotic properties of the measure (\ref{equivSpecMeak}) that we will be concerned with below are consequences of the following
\begin{theorem}
The spectral measure $m_{k\mu}$ has an asymptotic expansion in $\hbar$,
\begin{equation}\label{mainasymp}
m_{k\mu}(f) \sim (2\pi \hbar)^{-m} \sum c_i(f, \mu)\hbar^i,
\end{equation}
where $m=\dim X- \frac 12(\dim \mathsf{T} + \dim \mathsf G)$. Moreover,
\begin{equation}\label{firstInv}
c_0(f, \mu) = \int_{(T^*X)_\mu} f(\mathsf p_\mu) \nu_\mu,
\end{equation}
where $\nu_\mu$ is the symplectic volume form on $(T^*X)_\mu$.
\end{theorem}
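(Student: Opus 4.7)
The plan is to reduce the equivariant trace to a single oscillatory integral over the Cartan subgroup and then apply stationary phase to localize the leading term on the symplectic reduction.

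First, since $P_\hbar$ is $\mathsf{G}$-invariant, $g \mapsto \chi_{k\mu}(g)\,\mathrm{tr}(\tau_g^* f(P_\hbar))$ is a class function, so the Weyl integration formula reduces $m_{k\mu}(f)$ to an integral over $\mathsf{T}$. The Weyl character formula then expresses $|\Delta(t)|^2\chi_{k\mu}(t)$ as an alternating sum of exponentials which, after writing $t = \exp\xi$, contains the oscillations $e^{2\pi i k\langle w\mu,\xi\rangle}$ for $w\in W$. Since $\hbar = 1/k$, these are genuine semiclassical phases.

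Next, I would represent the Schwartz kernel of $f(P_\hbar)$, modulo $O(\hbar^\infty)$, as a semiclassical Lagrangian distribution on $X\times X$ whose principal symbol on the diagonal is $f(\mathsf{p})$. This follows either from the Helffer--Sjöstrand functional calculus or from writing $f$ as a Fourier transform and applying Egorov's theorem to $e^{isP_\hbar/\hbar}$. Evaluating on the graph of $\tau_t$ yields
\begin{equation*}
\mathrm{tr}(\tau_t^* f(P_\hbar)) = (2\pi\hbar)^{-n}\int_{T^*X} e^{i\langle\tau_t x - x,\xi\rangle/\hbar}\,a(x,\xi,t;\hbar)\,dx\,d\xi
\end{equation*}
with $a = f(\mathsf{p}) + O(\hbar)$ and $n = \dim X$. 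Combining this with the first step produces a single oscillatory integral whose stationary set in $(x,\xi,\xi_t)$ (where $t=\exp\xi_t$) is cut out by $\tau_t x = x$, the linearized action fixing $(x,\xi)$, and $\Phi_{\mathsf{T}}(x,\xi) = w\mu$, where $\Phi_{\mathsf{T}}$ is the $\mathsf{T}$-moment map of the cotangent-lifted action. After summing over $w$ and using $\mathsf{G}$-invariance of the amplitude, this critical set descends to $\Phi_{\mathsf{G}}^{-1}(\mathcal{O}_\mu)/\mathsf{G} = (T^*X)_\mu$, of dimension $2m = 2\dim X - \dim\mathsf{G} - \dim\mathsf{T}$.

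Stationary phase in the joint variables then delivers (\ref{mainasymp}). Of the $2n + \dim\mathsf{T}$ integration variables, $2m$ are tangent to the critical manifold and $\dim\mathsf{G} + \dim\mathsf{T}$ are transverse, so the transverse Hessian contributes $\hbar^{(\dim\mathsf{G}+\dim\mathsf{T})/2}$ and combines with the $\hbar^{-n}$ prefactor to give $\hbar^{-m}$. The principal amplitude is $f(\mathsf{p}_\mu)$, and the push-forward of Liouville measure times Haar measure on $\mathsf{T}$ against the transverse Hessian yields the reduced symplectic volume $\nu_\mu$, producing (\ref{firstInv}). The main obstacle I foresee is the careful bookkeeping of Hessian determinants together with the Weyl-denominator factors, so that the leading constant lands exactly at $\int f(\mathsf{p}_\mu)\,\nu_\mu$ without extraneous factors involving $\mathrm{vol}(\mathcal{O}_\mu)$ or $|W|$. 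I would address this by localizing near the identity of $\mathsf{T}$, where $\tau_t$ has the generic stabilizer (other components contributing only lower-order terms for generic $\mu$), and by invoking the reduction-in-stages identity relating $\mathsf{T}$-reduction at $\mu$ to $\mathsf{G}$-reduction at $\mathcal{O}_\mu$.
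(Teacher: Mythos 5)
The paper does not actually prove this theorem: it is quoted with references to \cite{DGS} (the case $\mathsf G=\mathsf T$), to \cite{GS2}, \S 12.13 (general $\mathsf G$), and to \cite{GW} (no group action), so your sketch can only be measured against the standard arguments in those sources. Your overall strategy --- Weyl integration, the character formula to produce the semiclassical phases $e^{i\langle w\mu,\xi_t\rangle/\hbar}$, a semiclassical parametrix for $f(P_\hbar)$, and stationary phase --- is the right one and is essentially the abelian argument of \cite{DGS}.

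There is, however, a genuine gap in your stationary-phase bookkeeping, and it sits exactly where the non-abelian case differs from the abelian one. For $\mu$ a regular value, the critical set of your phase in the variables $(x,\xi,\xi_t)$ near $\xi_t=0$ is $\{\xi_t=0\}\times \Phi_{\mathsf T}^{-1}(w\mu)$ (the fixed-point conditions are vacuous at $t=e$), which has dimension $2\dim X-\dim\mathsf T$; it is not $(T^*X)_\mu$ and does not have dimension $2m$. Indeed $\Phi_{\mathsf T}^{-1}(\mu)/\mathsf T$ is the $\mathsf T$-reduction, of dimension $2\dim X-2\dim\mathsf T$, which strictly contains the $\mathsf G$-reduction when $\mathsf G\neq\mathsf T$. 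So the transverse dimension is $2\dim\mathsf T$, not $\dim\mathsf G+\dim\mathsf T$, and the Gaussian integration alone yields $\hbar^{-\dim X+\dim\mathsf T}$, which overshoots $\hbar^{-m}$ by $\hbar^{-(\dim\mathsf G-\dim\mathsf T)/2}$. The missing decay does not come from the Hessian at all: after combining $|\Delta(t)|^2$ with the Weyl character formula a single factor $\overline{\Delta(\exp\xi_t)}$ survives in the amplitude, vanishing at $\xi_t=0$ to order equal to the number of positive roots, $\tfrac12(\dim\mathsf G-\dim\mathsf T)$; and because the phase vanishes identically on $\{\xi_t=0\}$, the $\xi_t\xi_t$-block of the \emph{inverse} transverse Hessian is zero, so each order of vanishing of the amplitude costs a full power of $\hbar$ rather than $\hbar^{1/2}$. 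This is also precisely where the coefficient \eqref{firstInv} is decided: the first nonvanishing term of the expansion involves derivatives of the Weyl denominator paired with derivatives transverse to $\Phi_{\mathsf T}^{-1}(\mu)$, and identifying the resulting density with $\int_{(T^*X)_\mu}f(\mathsf p_\mu)\,\nu_\mu$ --- an integral over the smaller $\mathsf G$-reduced space --- is the real content of the theorem, not residual bookkeeping. A cleaner route, which repairs your count, is to replace the Weyl character formula by the Kirillov character formula, converting $\chi_{k\mu}$ into an oscillatory integral over the coadjoint orbit $\mathcal O_{k\mu+\rho}$: the critical set is then genuinely $\Phi^{-1}(\mathcal O_\mu)$ with a fully nondegenerate transverse Hessian of rank $2\dim\mathsf G$, and both the power $\hbar^{-m}$ and the reduced Liouville density emerge directly.
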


(For the proof of this for $\mathsf G=\mathsf T$ see \cite{DGS}, and for arbitrary compact Lie group $\mathsf G$ see \cite{GS2}, \S 12.13. For the case where there is no group action, i.e. $\mathsf G=\{1\}$, see \cite{GW}.)

To extract spectral information from this result we will need concrete descriptions of $(T^*X)_\mu$ and $\mathsf p_\mu$, and we will deal with this issue in \S 2 below. Then in the remainder of this  paper we will assume that $X$ is equipped with a Riemannian metric and that $P$ is the semi-classical Schr\"odinger operator, $\hbar^2\Delta+V$, and we will attempt to extract information about $V$ from the spectral data (\ref{firstInv}). For instance for each $\mu \in \mathrm{Int}(\mathfrak t_+^*)$ the spectral measure (\ref{firstInv}) is supported on an interval $[F(\mu), +\infty)$ and we will show in \S 3 and \S 4 that there is a ``generalized Legendre relation" $\Gamma$ associating sets of points in the Lagrangian submanifold
\begin{equation}\label{graphdV}
\mathrm{graph}\ dV
\end{equation}
of $T^*(X_1/\mathsf G)$ ($X_1$ being the open subset of $X$ on which $\mathsf G$ acts locally free) with sets of points in the Lagrangian submanifold
\begin{equation}\label{graphdF}
\mathrm{graph}\ dF
\end{equation}
of $T^*\mathcal U$ ($\mathcal U$ being some open subset in the positive Weyl chamber $\mathfrak t_+^*$) and that in some case this is a canonical transformation, in which case $V$ is spectrally determined. For instance if $\mathsf G=\mathsf{T}$ and $X$ is a toric variety this turns out to be the case modulo genericity assumptions on $X$ (see \cite{GW2}) and in \S 5 and \S 6 we will describe some non-abelian analogues of this result.

\section{Reduction}

In this section $\mathsf G$ will be, as above, a compact connected Lie group, $M$ will be a symplectic manifold, and $\mathsf G \times M \to M$ a Hamiltonian action of $\mathsf G$ with moment map $\phi: M \to \mathfrak g^*$. For $\mathcal O \subset \mathfrak g^*$ a coadjoint orbit the ``reduction of $M$ with respect to $\mathcal O$" is the quotient space, $\phi^{-1}(\mathcal O)/\mathsf G$, which, modulo assumptions on $\mathcal O$ and $\phi$, is a symplectic manifold: \emph{the symplectic reduction of $M$ with respect to $\mathcal O$.} Before we describe these assumptions, however, we first recall that there is an alternative description of this space. Let $\mathsf{T}$, as in \S 1, be a Cartan subgroup of $\mathsf G$, $\mathfrak t$ its Lie algebra and $\mathfrak t_+^* \subset \mathfrak t^*$ the positive Weyl chamber in $\mathfrak t^*$. If $\mu$ is in $\mathrm{Int} (\mathfrak t_+^*)$ (which we'll assume to be the case from now on) the coadjoint orbit $\mathcal O$ of $\mathsf G$ through $\mu$ is, as a $\mathsf G$-space, just $\mathsf G/\mathsf{T}$, and the symplectic reduction of $M$ with respect to $\mathcal O$ can also be viewed as the quotient, $\phi_{\mathsf T}^{-1}(\mu)/\mathsf{T}$. In this section we'll recall what the space looks like when $M$ is, as in \S 1, the cotangent bundle $T^*X$ of a $\mathsf G$-manifold $X$. \footnote{A good reference for the material below is \cite{AM}, \S 4.3-4.5.} From the action of $\mathsf G$ on $X$ one gets an infinitesimal action of the Lie algebra, $\mathfrak g$, on $X$,
\begin{equation} \label{vgvx}
v \in \mathfrak g \mapsto v_X \in \mathrm{Vect}(X)
\end{equation}
and hence for each $p \in X$ a linear map
\begin{equation}
v \in \mathfrak g \mapsto v_X(p) \in T_pX
\end{equation}
which one can dualize to get a linear map
\begin{equation}\label{mom_res}
\phi_p: T_p^*X \to \mathfrak g^*,
\end{equation}
and this linear map is just the restriction to the fiber, $T_p^*X$, of the $\mathsf G$-moment map
\begin{equation}\label{Gmom_map}
\phi: T^*X \to \mathfrak g^*.
\end{equation}

Moreover the $\mathsf T$-moment map at $p$ is just the composite
\begin{equation}\label{Tmom_map}
T_p^*X \to \mathfrak g^* \to \mathfrak t^*
\end{equation}
of the mapping (\ref{mom_res}) and the dual of the inclusion map $\iota_\mathsf{T}: \mathfrak t \hookrightarrow \mathfrak g$. In other words, the $\mathsf G$-moment map, $\phi$, and the $\mathsf{T}$-moment map, $\phi_\mathsf{T}$, are related by
\begin{equation}
\phi_\mathsf{T} = \iota_\mathsf{T}^* \circ \phi.
\end{equation}

Now let $\mu$ be an element of $\mathrm{Int}(\mathfrak t_+^*)$. We claim
\begin{proposition}
If for all $(p, \xi) \in \phi^{-1}(\mu)$, the map (\ref{Tmom_map}) is surjective, then $\mu$ is a regular value of $\phi$.
\end{proposition}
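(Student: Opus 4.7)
The plan is to show that the differential $d\phi_{(p,\xi)}\colon T_{(p,\xi)}T^*X \to \mathfrak g^*$ is surjective for every $(p,\xi)\in\phi^{-1}(\mu)$, by exhibiting two natural subspaces of its image whose sum is all of $\mathfrak g^*$.

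First, I would exploit the vertical directions. Since $\phi$ restricted to the cotangent fiber $T_p^*X$ is the linear map $\phi_p$ of (\ref{mom_res}), and since the inclusion $T_p^*X \hookrightarrow T_{(p,\xi)}T^*X$ of the vertical subspace is canonical, the image of $d\phi_{(p,\xi)}$ contains $V:=\phi_p(T_p^*X)$. By assumption, $\iota_\mathsf{T}^*(V) = \mathfrak t^*$, i.e.\ $V$ projects onto $\mathfrak t^*$ under the dual of the inclusion $\mathfrak t\hookrightarrow\mathfrak g$.

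Next, I would use $\mathsf G$-equivariance to capture the ``horizontal'' directions tangent to the $\mathsf G$-orbit. For each $v\in\mathfrak g$, the infinitesimal vector field $v_{T^*X}$ at $(p,\xi)$ satisfies $d\phi_{(p,\xi)}(v_{T^*X}) = \mathrm{ad}^*(v)\phi(p,\xi) = \mathrm{ad}^*(v)\mu$, so the image of $d\phi_{(p,\xi)}$ also contains $W := T_\mu(\mathsf G\cdot\mu) = \{\mathrm{ad}^*(v)\mu : v\in\mathfrak g\}$. The key observation is that $W = \ker(\iota_\mathsf{T}^*)$. One inclusion is immediate: for $u\in\mathfrak t$ and any $v\in\mathfrak g$, using the root-space decomposition $\mathfrak g = \mathfrak t\oplus\bigoplus_\alpha \mathfrak g_\alpha$ one sees that $[v,u]$ has no $\mathfrak t$-component, so $(\mathrm{ad}^*(v)\mu)(u) = -\mu([v,u]) = 0$, hence $W\subset\ker(\iota_\mathsf{T}^*)$. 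Equality then follows from dimension count, since $\mu\in\mathrm{Int}(\mathfrak t_+^*)$ has stabilizer $\mathsf G_\mu = \mathsf T$, giving $\dim W = \dim\mathfrak g - \dim\mathfrak t = \dim\ker(\iota_\mathsf{T}^*)$.

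Putting these together, $V+W \subset \mathrm{im}(d\phi_{(p,\xi)})$. Since $\iota_\mathsf{T}^*(V)=\mathfrak t^*$ and $W=\ker(\iota_\mathsf{T}^*)$, an elementary linear-algebra argument yields $V+W=\mathfrak g^*$, so $d\phi_{(p,\xi)}$ is surjective, proving that $\mu$ is a regular value. The only mildly delicate point is the identification $W=\ker(\iota_\mathsf{T}^*)$, which rests on the hypothesis $\mu\in\mathrm{Int}(\mathfrak t_+^*)$; everything else is bookkeeping with the moment map formalism already set up in \S 2.
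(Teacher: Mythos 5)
Your proof is correct, and it takes a genuinely different route from the paper's. The paper never computes $d\phi$ directly: it invokes the standard fact that $\mu$ is a regular value of a moment map if and only if the group acts locally freely on $\phi^{-1}(\mu)$, uses $\mu\in\mathrm{Int}(\mathfrak t_+^*)$ to replace the $\mathsf G$-stabilizer of $(p,\xi)$ by its $\mathsf T$-stabilizer, and then chains the inclusion $\mathsf T_{(p,\xi)}\subset\mathsf T_p$ with the duality between surjectivity of (\ref{Tmom_map}) and local freeness of the $\mathsf T$-action at $p$ (this last step is what the paper extracts as Proposition \ref{surj_locfree}). You instead verify surjectivity of $d\phi_{(p,\xi)}$ by hand, writing $\mathfrak g^*=V+W$ with $V=\phi_p(T_p^*X)$ supplied by the vertical directions (this is where the hypothesis enters, via $\iota_{\mathsf T}^*(V)=\mathfrak t^*$) and $W=T_\mu(\mathsf G\cdot\mu)=\ker(\iota_{\mathsf T}^*)$ supplied by the orbit directions via equivariance. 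Both arguments pivot on the same Lie-theoretic fact, that the coadjoint stabilizer of $\mu\in\mathrm{Int}(\mathfrak t_+^*)$ is exactly $\mathsf T$; yours is more self-contained, since it in effect re-proves in this situation the identity $\mathrm{im}(d\phi_m)=\mathfrak g_m^{\perp}$ underlying the ``regular iff locally free'' criterion, at the cost of the root-space computation and dimension count identifying $W$ with $\ker(\iota_{\mathsf T}^*)$. The paper's route is shorter granted the standard machinery and isolates a reusable criterion along the way. One small point worth making explicit in your write-up: the step $\mu([v,u])=0$ uses the embedding $\mathfrak t^*\hookrightarrow\mathfrak g^*$ that extends $\mu$ by zero on the root spaces (equivalently, via the splitting $\mathfrak g=\mathfrak t\oplus[\mathfrak t,\mathfrak g]$); this is the identification the paper uses throughout, but your computation does depend on it.
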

\begin{proof}
$\mu \in \mathrm{Int}(\mathfrak t_+^*)$ is a regular value of $\phi$ if and only if, for every $(p, \xi) \in \phi^{-1}(\mu)$ the action of $\mathsf G$ at $(p, \xi)$ is locally free. However since $\mu$ is in $\mathrm{Int}(\mathfrak t_+^*)$ its stabilizer in $\mathsf G$ is equal to its stabilizer in $\mathsf T$; so the action of $\mathsf G$ at $(p, \xi)$ is locally free if and only if the same is true for the action of $\mathsf{T}$, and this is follows from the surjectivity of (\ref{Tmom_map}) (so that by duality, the $T$-action on $X$ is locally free) and the fact that the stabilizer group of the $\mathsf{T}$-action on $T^*X$ at $(p, \xi)$ is contained in  the stabilizer group of the $\mathsf{T}$-action on $X$ at $p$.
\end{proof}

From this proof we also obtain the following criterion for (\ref{Tmom_map}) to be surjective:
\begin{proposition}\label{surj_locfree}
The map (\ref{Tmom_map}) is surjective if and only if the action of $\mathsf T$ at $p$ is locally free.
\end{proposition}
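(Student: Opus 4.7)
The plan is to reduce the statement to a straightforward duality argument and then translate the resulting linear-algebra condition into the Lie-theoretic one about stabilizers.

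First, I would unwind the definition of the composition (\ref{Tmom_map}). By construction, $\phi_p: T_p^*X \to \mathfrak g^*$ is the transpose of the evaluation map $v \in \mathfrak g \mapsto v_X(p) \in T_pX$, and $\iota_\mathsf{T}^*$ is the transpose of the inclusion $\iota_\mathsf{T}: \mathfrak t \hookrightarrow \mathfrak g$. Consequently the composite (\ref{Tmom_map}) is precisely the transpose of the restricted infinitesimal action map
\[
\alpha_p: \mathfrak t \to T_p X, \qquad v \mapsto v_X(p).
\]

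Second, I would invoke the elementary fact that, between finite-dimensional vector spaces, a linear map is surjective if and only if its transpose is injective. Applying this to $\alpha_p$, the surjectivity of (\ref{Tmom_map}) is equivalent to the injectivity of $\alpha_p$.

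Third, I would identify $\ker \alpha_p$ with the Lie algebra of the $\mathsf T$-stabilizer $\mathsf T_p$: this is the standard fact that for a Lie group action the isotropy subalgebra at $p$ consists of precisely those $v \in \mathfrak t$ whose fundamental vector field vanishes at $p$. Hence $\alpha_p$ is injective if and only if $\mathrm{Lie}(\mathsf T_p) = 0$, i.e., if and only if $\mathsf T_p$ is discrete. Since $\mathsf T$ is compact, $\mathsf T_p$ being discrete is equivalent to its being finite, which is the usual definition of the $\mathsf T$-action being locally free at $p$.

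I do not expect any essential obstacle here; the argument is really just dualization plus the identification of the kernel of the infinitesimal action with the isotropy subalgebra. The only point to state carefully is this last identification, which uses that $\mathsf T$ is a closed (Lie) subgroup and the exponential map is a local diffeomorphism at $0$.
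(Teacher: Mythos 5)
Your proof is correct and takes essentially the same route as the paper's: both recognize (\ref{Tmom_map}) as the transpose of the infinitesimal action map $v \in \mathfrak t \mapsto v_X(p)$, identify the kernel of that map with the isotropy subalgebra $\mathfrak t_p = \mathrm{Lie}(\mathsf T_p)$, and conclude that surjectivity holds precisely when $\mathfrak t_p = 0$, i.e.\ when $\mathsf T_p$ is finite. The only cosmetic difference is that the paper phrases the duality step as ``the image of (\ref{Tmom_map}) is $\mathfrak t_p^\perp$'' while you phrase it as ``surjective iff the transpose is injective''; these are the same fact.
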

\begin{proof}
Let $\mathsf T_p$ be the stabilizer of $p$ in $\mathsf{T}$ and $\mathfrak t_p$ its Lie algebra. Then $\mathfrak t_p$ is the kernel of the map
\[
v \in \mathfrak t \to v_X(p) \in T_pX
\]
and hence the image of the map (\ref{Tmom_map}) is $\mathfrak t_p^\perp$. Thus $\mathsf T_p$ is a finite subgroup of $\mathsf T$ if and only if (\ref{Tmom_map}) is surjective.
\end{proof}

Since $X$ is compact there are at most a finite number of subtorus, $\mathsf{T}_r$, which can occur as stabilizers of points of $X$. Thus this result implies
\begin{theorem}\label{surjthm1}
Suppose
\begin{equation}
\mu \not\in \mathfrak t_r^\perp
\end{equation}
for all of these $\mathsf T_r$'s, then for $(p, \xi) \in \phi^{-1}(\mu)$ the map (\ref{Tmom_map}) is surjective.
\end{theorem}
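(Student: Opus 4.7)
The plan is to argue by contradiction, combining Proposition~\ref{surj_locfree} with the explicit description of the image of (\ref{Tmom_map}) recorded inside its proof. Suppose $(p,\xi) \in \phi^{-1}(\mu)$ is a point at which (\ref{Tmom_map}) fails to be surjective. By Proposition~\ref{surj_locfree} the action of $\mathsf T$ at $p$ is then not locally free, which means the identity component $(\mathsf T_p)_0$ of the stabilizer $\mathsf T_p$ is a non-trivial subtorus of $\mathsf T$. By the finiteness remark preceding the theorem, this subtorus must appear as one of the $\mathsf T_r$ in the given list, so $\mathfrak t_p \supseteq \mathfrak t_r$ and dually $\mathfrak t_p^\perp \subseteq \mathfrak t_r^\perp$.

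Next I would invoke the image formula extracted from the proof of Proposition~\ref{surj_locfree}: the image of (\ref{Tmom_map}) is exactly $\mathfrak t_p^\perp$. Since $(p,\xi) \in \phi^{-1}(\mu)$ forces $\mu$ to be the $\mathsf T$-moment value at $(p,\xi)$ (directly if $\phi$ is the $\mathsf T$-moment map, or via $\phi_\mathsf{T} = \iota_\mathsf{T}^* \circ \phi$ otherwise), the element $\mu$ lies in this image. Therefore $\mu \in \mathfrak t_p^\perp \subseteq \mathfrak t_r^\perp$, contradicting the hypothesis $\mu \notin \mathfrak t_r^\perp$. Hence no such $(p,\xi)$ exists and (\ref{Tmom_map}) is surjective at every point of $\phi^{-1}(\mu)$.

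There is no substantive obstacle here: the whole argument is a one-line contrapositive of Proposition~\ref{surj_locfree} together with the identity $\operatorname{im}(\ref{Tmom_map}) = \mathfrak t_p^\perp$. The only subtlety worth a second look is making sure that the finite family $\{\mathsf T_r\}$ really does include $(\mathsf T_p)_0$ for every $p$, i.e.\ that the phrase \emph{subtori occurring as stabilizers of points} in the remark preceding the theorem should be read as including the identity components of all stabilizer subgroups of the $\mathsf T$-action. This is the natural reading in view of the principal orbit theorem for the compact $\mathsf T$-manifold $X$, and once it is fixed the proof is essentially immediate.
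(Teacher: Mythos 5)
Your proof is correct and follows essentially the same route as the paper's (very terse) argument: the paper likewise deduces from $\mu\notin\mathfrak t_r^\perp$ that $\mathsf T_p$ must be finite and then invokes Proposition~\ref{surj_locfree}, implicitly using the facts you spell out, namely that the image of (\ref{Tmom_map}) is $\mathfrak t_p^\perp$ and that $\mu$ lies in that image when $(p,\xi)\in\phi^{-1}(\mu)$. Your contrapositive formulation and the remark about reading the family $\{\mathsf T_r\}$ as the identity components of the stabilizers simply make explicit what the paper leaves unsaid.
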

\begin{proof}
If $\mu$ satisfies these conditions, the stabilizer of $p$ has to be a finite subgroup of $\mathsf{T}$ and hence (\ref{Tmom_map}) is surjective.
\end{proof}

Another implication of Proposition \ref{surj_locfree} is
\begin{theorem}\label{surjthm2}
If the action of $\mathsf{T}$ is effective, the map (\ref{Tmom_map}) is surjective for an open dense set of $p$'s.
\end{theorem}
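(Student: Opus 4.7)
By Proposition \ref{surj_locfree}, the map (\ref{Tmom_map}) is surjective at $p$ exactly when the $\mathsf T$-action at $p$ is locally free, i.e.\ when the stabilizer $\mathsf T_p$ is finite. Thus I need only show that the set
\[
X_{\mathrm{lf}} = \{\,p \in X : \mathsf T_p \text{ is finite}\,\}
\]
is open and dense in $X$.

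Openness is essentially tautological: $p \in X_{\mathrm{lf}}$ iff the linear map $v \in \mathfrak t \mapsto v_X(p) \in T_pX$ is injective, and injectivity of a linear map depending smoothly on $p$ is an open condition.

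For density I would argue by contradiction, using the finiteness of orbit types. Since $\mathsf T$ acts on the compact manifold $X$, the slice theorem guarantees that only finitely many subtori $\mathsf S_1,\ldots,\mathsf S_N$ of $\mathsf T$ arise as identity components of stabilizer groups. For each such $\mathsf S_i$ the fixed-point set
\[
X^{\mathsf S_i} = \{\,p \in X : \mathsf S_i \subseteq \mathsf T_p\,\}
\]
is a closed submanifold of $X$, and the complement of $X_{\mathrm{lf}}$ is precisely the finite union $\bigcup_{\dim\mathsf S_i>0} X^{\mathsf S_i}$. If this complement had nonempty interior, then by the Baire category theorem some $X^{\mathsf S_i}$ with $\dim\mathsf S_i>0$ would itself have nonempty interior in $X$; but a closed submanifold of the connected manifold $X$ with nonempty interior must equal all of $X$, forcing the positive-dimensional subtorus $\mathsf S_i$ to fix $X$ pointwise, which contradicts effectiveness.

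The main step requiring care in a full write-up is the finiteness of the collection $\{\mathsf S_i\}$, which is a standard consequence of the slice theorem for compact group actions on compact manifolds; once that is in hand the remainder is just Baire category together with the observation that a proper closed submanifold of a connected manifold has empty interior.
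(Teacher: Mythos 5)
Your proof is correct, and it establishes exactly what is needed: by Proposition \ref{surj_locfree} the question reduces to showing that the locally free locus of the $\mathsf T$-action is open and dense, and your argument (openness of injectivity of $v \mapsto v_X(p)$, plus finiteness of orbit types, Baire category, and the fact that a positive-dimensional subtorus with a fixed-point set of nonempty interior would have to act trivially on the connected manifold $X$) proves this cleanly. The paper's own proof is a one-line appeal to the same standard fact -- that an effective action of a compact torus has trivial (hence finite) stabilizer on an open dense set -- so you have taken the same route but supplied the details the paper leaves to the reader.
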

\begin{proof}
If the action of $\mathsf{T}$ is effective then $\mathsf T_p$ is the identity group for an open dense set of $p$'s.
\end{proof}

Henceforth we'll denote by $X_0$ the set of points in $X$ where the stabilizer $\mathsf{T}_p$ is finite, i.e. when the action of $\mathsf{T}$ is locally free.
\begin{theorem}\label{toinf}
Let $p$ be a point in the complement of $X_0$ and $\mu$ an element of $\mathfrak t^*$ satisfying the conditions of Theorem \ref{surjthm1}. Then if $(p_i, \xi_i)$ is in $\phi^{-1}(\mu)$ and $p_i \to p$, $(p_i, \xi_i)$ tends to infinity in $T^*X$.
\end{theorem}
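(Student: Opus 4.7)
The plan is to exploit the fact that, since $p\notin X_0$, the stabilizer $\mathsf T_p$ is positive dimensional, so the Lie algebra $\mathfrak t_p$ is nonzero; combined with the hypothesis of Theorem \ref{surjthm1}, this will force any $(p_i,\xi_i)\in\phi^{-1}(\mu)$ with $p_i\to p$ to have $\|\xi_i\|\to\infty$.

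First I would observe that $\mathsf T_p$ is one of the finitely many stabilizer subtori $\mathsf T_r$ from the discussion preceding Theorem \ref{surjthm1}. Since $p\notin X_0$, $\mathfrak t_p\neq 0$, so the condition $\mu\notin\mathfrak t_p^\perp$ in the hypothesis of Theorem \ref{surjthm1} is a genuine restriction, and it lets me pick a vector $v\in\mathfrak t_p$ with $\langle\mu,v\rangle\neq 0$.

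Next I would use the defining equation of the moment map. From \eqref{mom_res}, \eqref{Gmom_map}, \eqref{Tmom_map} the $\mathsf T$-moment value of $(p_i,\xi_i)$ evaluated on any $w\in\mathfrak t$ is $\langle\xi_i,w_X(p_i)\rangle$. Since $(p_i,\xi_i)\in\phi^{-1}(\mu)$, applying this to the chosen $v$ gives
\[
\langle\xi_i,v_X(p_i)\rangle=\langle\mu,v\rangle\neq 0.
\]
On the other hand, $v\in\mathfrak t_p$ means by definition $v_X(p)=0$, and the vector field $v_X$ on $X$ is smooth, so $v_X(p_i)\to 0$ as $p_i\to p$. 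Fixing any Riemannian metric on $X$ (and the induced metric on $T^*X$), the Cauchy--Schwarz inequality gives
\[
\|\xi_i\|\ \ge\ \frac{|\langle\mu,v\rangle|}{\|v_X(p_i)\|}\ \longrightarrow\ \infty,
\]
which is precisely the statement that $(p_i,\xi_i)$ leaves every compact subset of $T^*X$.

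There is no real obstacle here; the only point to check is that the hypothesis of Theorem \ref{surjthm1} applies to $\mathsf T_p$ itself (which it does, because $\mathsf T_p$ is one of the stabilizer subtori and is positive dimensional by the assumption $p\notin X_0$). Everything else is just linear algebra plus the continuity of the generating vector fields $v_X$.
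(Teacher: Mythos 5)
Your proof is correct, and it takes a more direct route than the paper's. The paper argues by contradiction and compactness: if $(p_i,\xi_i)$ did not tend to infinity, a subsequence would converge to some $(p,\xi)\in\phi^{-1}(\mu)$ lying over $p$, which is impossible because the image of $(\phi_{\mathsf T})_p$ is $\mathfrak t_p^\perp$ (Proposition \ref{surj_locfree}) and the hypothesis of Theorem \ref{surjthm1} says $\mu\notin\mathfrak t_p^\perp$. You instead make the same underlying fact quantitative: choosing $v\in\mathfrak t_p$ with $\langle\mu,v\rangle\neq 0$, the moment-map identity $\langle\xi_i,v_X(p_i)\rangle=\langle\mu,v\rangle$ together with $v_X(p_i)\to v_X(p)=0$ forces $\|\xi_i\|\ge |\langle\mu,v\rangle|/\|v_X(p_i)\|\to\infty$. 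Your version is self-contained (it does not invoke the earlier propositions), avoids the subsequence extraction, and yields an explicit lower bound on the rate of blow-up of $\|\xi_i\|$ in terms of $\|v_X(p_i)\|$; the paper's version is shorter because it leans on the surjectivity criterion already established. The one small point worth making explicit in your write-up is that $v_X(p_i)\neq 0$ for every $i$ (which follows immediately from $\langle\xi_i,v_X(p_i)\rangle=\langle\mu,v\rangle\neq 0$), so the division in your Cauchy--Schwarz estimate is legitimate.
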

\begin{proof}
If not one can, by passing to a subsequence assume that $(p_i, \xi_i)$ converges in $T^*X$ to a limit point, $(p, \xi)$, and hence that $(p, \xi)$ is in $\phi^{-1}(\mu)$.
\end{proof}

Equipping $T^*X$ with a $\mathsf G$-invariant inner product, $\langle \cdot, \cdot \rangle$, we get a splitting of $T^*X_0$ into a direct sum of vector bundles
\[
T^*X_0 = H \oplus V,
\]
where for each $p \in X_0$, $H_p$ is the kernel of the map (\ref{Tmom_map}) and $V_p$ its ortho-complement with respect to $\langle \cdot, \cdot\rangle_p$. Hence at every point $p \in X_0$ there is a unique element
\begin{equation}\label{alphamup}
\alpha_\mu(p) \in V_p
\end{equation}
such that
\begin{equation}\label{dphiTp}
(\phi_{\mathsf T})_p (\alpha_\mu(p)) = \mu
\end{equation}
and hence a unique $C^\infty$ one-form, $\alpha_\mu$, on $X_0$ with the properties (\ref{alphamup}) and (\ref{dphiTp}). In particular by property (\ref{dphiTp}) the map
\begin{equation}\label{Hphi}
H \to \phi_{\mathsf T}^{-1}(\mu)
\end{equation}
mapping $(p, \xi)$ onto $(p, \xi+\alpha_\mu(p))$ is a $\mathsf T$-equivariant diffeomorphism of $H$ onto $\phi_{\mathsf T}^{-1}(\mu)$ and hence since the action of $\mathsf T$ on $\phi_{\mathsf T}^{-1}(\mu)$ is locally free we get a diffeomorphism of orbifolds,
\begin{equation}
H/\mathsf{T} \to \phi_{\mathsf T}^{-1}(\mu)/\mathsf{T}
\end{equation}
where the orbifold on the right is the \emph{symplectic reduction} of $T^*X_0$ at $\mu$ with respect to the action of $\mathsf T$.

Next note that since $H_p$ is the kernel of the linear map (\ref{Tmom_map}) it is the space of $\xi \in T_p^*X$ satisfying
\begin{equation}
\langle v_X(p), \xi\rangle = 0
\end{equation}
for all $v \in \mathfrak t$. In other words it is the set of all vectors $\xi \in T_p^*X_0$ orthogonal to the orbit of $\mathsf T$ through $p$, or alternatively
\begin{equation}
H=\pi^*(T^*(X_0/\mathsf{T}))
\end{equation}
where $\pi$ is the projection of $X_0$ onto $X_0/\mathsf T$. Hence the $\mathsf{T}$ equivariant diffeomorphism (\ref{Hphi}) gives one a diffeomorphism
\begin{equation}\label{co=sr}
T^*(X_0/\mathsf{T}) \to \phi_\mathsf{T}^{-1}(\mu)/\mathsf{T}
\end{equation}
of the cotangent bundle of $X_0/\mathsf{T}$ onto the symplectic reduction of $T^*X$ at $\mu$ with respect to the action of $\mathsf{T}$ on $T^*X$.

\emph{A cautionary remark}:  The action of $\mathsf T$ on $X_0$ is locally free but not necessarily free; hence this is a diffeomorphism of orbifolds. (However in most of the examples we'll be discussing below these orbifolds are manifolds.)

Next note that by (\ref{Tmom_map}) we have an inclusion
\begin{equation}
\phi^{-1}(\mu) \to \phi_{\mathsf T}^{-1}(\mu)
\end{equation}
and hence an embedding
\begin{equation}
\phi^{-1}(\mu)/\mathsf{T} \to \phi_{\mathsf T}^{-1}(\mu)/\mathsf{T}.
\end{equation}
Thus from the identification (\ref{co=sr}) one gets an embedding
\begin{equation}
\phi^{-1}(\mu)/\mathsf{T} \to T^*(X_0/\mathsf{T}).
\end{equation}
The image of this embedding is a bit complicated to describe at arbitrary points of $X_0/\mathsf{T}$, however it turns out to have a rather simple description over the open subset, $X_1/\mathsf{T}$, where $X_1$ is the set of points, $p \in X_0$, at which the action of $\mathsf G$ itself is locally free. To see this note that if $p$ is in $X_1$, the map
\begin{equation}\label{dphip}
\phi_p: T_p^*X \to \mathfrak g^*
\end{equation}
is surjective so there exists a unique $\alpha_\mu(p) \in T_p^*X$ which is perpendicular to the kernel
\begin{equation}\label{Kp}
K_p = \mathrm{ker}(\phi_p)
\end{equation}
 and $\phi_p$ maps $\alpha_\mu(p)$ onto $\mu$. However, the kernel $K_p$ is contained in the kernel of the map (\ref{Tmom_map}), so this ``$\mathsf G$-equivariant definition" of $\alpha_\mu$ coincides with the ``$\mathsf{T}$-equivariant definition" that we gave above. Moreover the assignment
\begin{equation}
p \in X_1 \to K_p \subset T_p^*X_1
\end{equation}
defines a vector sub-bundle $K$ of $T^*X_1$ sitting inside the horizontal bundle $H|_{X_1}$ and the pre-image, $\phi^{-1}(\mu)$, is, over $X_1$, just the image of $K$ with respect to the mapping (\ref{Hphi}), i.e. over $X_1$, $\phi^{-1}(\mu)$ is a fiber bundle with fiber
\begin{equation}
K_p + \alpha_\mu(p)
\end{equation}
at $p \in X_1$. Moreover over the subspace $X_1/\mathsf{T}$ of $X_0/\mathsf{T}$, $\phi^{-1}(\mu)/\mathsf{T}$ has an equally nice description. Since $\mathsf G$ acts in a locally free fashion on $X_1$, $X_1/\mathsf G$ is well defined as an orbifold, so one has a fibration of orbifold
\begin{equation}
\gamma: X_1/\mathsf{T} \to X_1/\mathsf G,
\end{equation}
and it is easy to see that under the identification
\[
\phi_T^{-1}(\mu)/\mathsf{T} \to T^*X_0/\mathsf{T}
\]
the space
\[
(\phi^{-1}(\mu) \cap T^*X_1)/\mathsf{T}
\]
gets mapped on the ``horizontal" sub-bundle of $T^*(X_1/\mathsf{T})$ with respect to the fibration, $\gamma: X_1/\mathsf{T} \to X_1/\mathsf G$, i.e.
\begin{equation}
(\phi^{-1}(\mu) \cap T^*X_1)/\mathsf{T} \simeq \gamma^* T^*(X_1/\mathsf G).
\end{equation}

We will conclude this section by saying a few words about the elliptic operator, $P$, in section one and its ``reduced symbol", (\ref{pmu}).
\begin{proposition}\label{pproper}
Let
\begin{equation}\label{symbolp}
\mathsf p: T^*X \to \mathbb R
\end{equation}
be the symbol of the operator $P$. Then $p|_{\phi^{-1}(\mu)}$ is proper.
\end{proposition}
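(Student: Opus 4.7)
The strategy is to observe that the symbol $\mathsf{p}$ is proper as a function on the entire cotangent bundle $T^*X$; the properness of its restriction to the closed subset $\phi^{-1}(\mu)$ is then automatic.

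Since we are in the Schr\"odinger setting $P_\hbar = \hbar^2\Delta + V$ indicated just before the statement, the semi-classical principal symbol has the form $\mathsf{p}(x,\xi) = \|\xi\|_g^2 + V(x)$, where $\|\cdot\|_g$ denotes the co-norm dual to the chosen Riemannian metric on $X$. Since $X$ is compact, $V$ is bounded, so for any compact interval $[a,b] \subset \mathbb{R}$,
\begin{equation*}
\mathsf{p}^{-1}([a,b]) \subseteq \bigl\{(x,\xi) \in T^*X : \|\xi\|_g^2 \leq b - \min_X V\bigr\},
\end{equation*}
which is a closed, fiberwise bounded subset of $T^*X$ over the compact base $X$, hence compact. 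Thus $\mathsf{p}$ is proper on all of $T^*X$.

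Because $\phi: T^*X \to \mathfrak{g}^*$ is continuous, $\phi^{-1}(\mu)$ is closed in $T^*X$. Consequently, for any compact $K \subset \mathbb{R}$, the set $(\mathsf{p}|_{\phi^{-1}(\mu)})^{-1}(K) = \mathsf{p}^{-1}(K) \cap \phi^{-1}(\mu)$ is the intersection of a compact set with a closed set, and is therefore itself compact.

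I do not foresee any serious obstacle here; the argument is essentially a matter of unpacking the coercivity of the Schr\"odinger symbol. If one instead wanted a proof that visibly uses the equivariant geometry developed in this section, one could proceed sequentially: given $(p_i,\xi_i) \in \phi^{-1}(\mu)$ with $\mathsf{p}(p_i,\xi_i)$ bounded, compactness of $X$ yields $p_i \to p \in X$ along a subsequence; if $p \notin X_0$, Theorem \ref{toinf} forces $(p_i,\xi_i) \to \infty$ in $T^*X$, contradicting the bound on $\|\xi_i\|_g$ extracted from coercivity of $\mathsf{p}$, while if $p \in X_0$ the same coercivity bounds $\|\xi_i\|_g$ directly, producing a convergent subsequence whose limit lies in $\phi^{-1}(\mu)$ by closedness.
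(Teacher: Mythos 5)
Your argument is correct and matches the paper's: both rest on the properness of $\mathsf p$ on all of $T^*X$ (the paper deduces this from ellipticity, you from the explicit coercivity of the Schr\"odinger symbol over the compact base $X$) followed by restriction to the level set. The only difference is cosmetic --- the paper rules out a non-compact preimage by invoking Theorem \ref{toinf}, whereas you get closedness of $\phi^{-1}(\mu)$ directly from continuity of $\phi$, which is simpler and equally valid; your final paragraph already records the Theorem \ref{toinf} variant, which is essentially the paper's own wording.
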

\begin{proof}
By ellipticity (\ref{symbolp}) is proper, therefore if its restriction to $\phi^{-1}(\mu)$ were not proper there would exist a sequence of points $(p_i, \xi_i) \in  \phi^{-1}(\mu)$ converging to a point $(p, \xi)$ not on $\phi^{-1}(\mu)$ and this can't happen by Theorem \ref{toinf}.
\end{proof}

Since the function $\mathsf p|_{\phi^{-1}(\mu)}$ is $\mathsf{T}$-invariant there is a unique function
\begin{equation}
\mathsf p_\mu: \phi^{-1}(\mu)/\mathsf{T} \to \mathbb R
\end{equation}
whose pull back to $\phi^{-1}(\mu)$ is $\mathsf p|_{\phi^{-1}(\mu)}$ and this, by definition, is the reduced symbol (\ref{pmu}) of $P$. Thus we get as a corollary of Proposition \ref{pproper}
\begin{theorem}
The reduced symbol (\ref{pmu}) is proper and in particular the spectral invariants (\ref{firstInv}) are well-defined.
\end{theorem}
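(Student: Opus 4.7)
My plan is to deduce this directly from Proposition \ref{pproper}, exploiting the fact that the quotient map by a compact group is proper. The statement has two parts: properness of $\mathsf p_\mu$, and well-definedness of the integral in (\ref{firstInv}).

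For properness, I would argue as follows. Let $\pi_\mathsf{T}: \phi^{-1}(\mu) \to \phi^{-1}(\mu)/\mathsf{T}$ be the quotient map, so that by construction $\pi_\mathsf{T}^*\mathsf p_\mu = \mathsf p|_{\phi^{-1}(\mu)}$. For a compact set $K \subset \mathbb R$, Proposition \ref{pproper} says that $A := (\mathsf p|_{\phi^{-1}(\mu)})^{-1}(K)$ is compact in $\phi^{-1}(\mu)$. Since $\mathsf{T}$ is compact, the quotient map $\pi_\mathsf{T}$ is proper and continuous, so in particular the image of a compact set is compact; moreover $A$ is $\mathsf{T}$-saturated, so $\pi_\mathsf{T}(A) = \mathsf p_\mu^{-1}(K)$. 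Thus $\mathsf p_\mu^{-1}(K)$ is compact and $\mathsf p_\mu$ is proper.

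For the well-definedness of $c_0(f,\mu)$, given $f \in C_0^\infty(\mathbb R)$ with support in a compact set $K$, properness of $\mathsf p_\mu$ tells us that $f(\mathsf p_\mu)$ is a continuous (indeed smooth) function on $(T^*X)_\mu$ with support contained in the compact set $\mathsf p_\mu^{-1}(K)$. Since $(T^*X)_\mu$ is an orbifold with a smooth symplectic volume form $\nu_\mu$ (well-defined by the results of \S 2, using the identification $\phi^{-1}(\mu)/\mathsf{T} \simeq (T^*X)_\mu$ at points where the action is locally free), the integral $\int_{(T^*X)_\mu} f(\mathsf p_\mu)\, \nu_\mu$ is a finite integral of a compactly supported continuous function against a smooth measure, hence well-defined.

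There is no real obstacle here; the argument is essentially a routine combination of Proposition \ref{pproper} with the properness of quotient maps by compact group actions. The only minor subtlety is verifying that the compactly supported continuous function $f(\mathsf p_\mu)$ can be integrated against $\nu_\mu$ on an orbifold rather than a manifold, but since all of this takes place over $X_0/\mathsf{T}$ where the $\mathsf{T}$-action is locally free, one can either work with orbifold integration directly or pass to a finite cover locally, and the content of the theorem is unaffected.
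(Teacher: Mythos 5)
Your argument is correct and is essentially the paper's own: the theorem is stated there as an immediate corollary of Proposition \ref{pproper}, and your write-up simply supplies the routine details (the $\mathsf{T}$-saturation of $(\mathsf p|_{\phi^{-1}(\mu)})^{-1}(K)$ and the compact support of $f(\mathsf p_\mu)$) that the paper leaves implicit.
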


In addtion we get as a corollary
\begin{proposition}\label{alphaproper}
The one form $\alpha_\mu$, viewed as a map
\begin{equation}
\alpha_\mu: X_0 \to T^*X_0,
\end{equation}
is proper.
\end{proposition}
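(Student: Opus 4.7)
The plan is to observe that the image of $\alpha_\mu$ lies inside $\phi_\mathsf{T}^{-1}(\mu)$: this is immediate from the defining equation (\ref{dphiTp}), namely $(\phi_\mathsf{T})_p(\alpha_\mu(p))=\mu$. Consequently, properness of $\alpha_\mu$ reduces to the assertion that no sequence in $\phi_\mathsf{T}^{-1}(\mu)$ can accumulate at a point outside $X_0$, which is exactly the $\mathsf T$-equivariant analogue of Theorem \ref{toinf}.

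Concretely, given a compact set $K\subset T^*X_0$, I would take a sequence $p_i \in \alpha_\mu^{-1}(K)$ and show it subconverges in $X_0$. By compactness of $X$, after passing to a subsequence $p_i\to q$ for some $q\in X$, and since $K$ is compact (hence closed) in $T^*X$, after a further subsequence $\alpha_\mu(p_i)\to(q,\eta)\in T^*X$ with $(q,\eta)\in K$. The crux is to rule out the possibility $q \in X\setminus X_0$.

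Suppose, toward a contradiction, that $q\notin X_0$. Then $\mathsf{T}_q$ has positive-dimensional Lie algebra $\mathfrak t_q\neq 0$, so $\mathfrak t_q^\perp$ is a proper subspace of $\mathfrak t^*$. By continuity of the $\mathsf T$-moment map,
\[
(\phi_\mathsf{T})_q(\eta)=\lim_{i\to\infty}(\phi_\mathsf{T})_{p_i}(\alpha_\mu(p_i))=\mu,
\]
so $\mu$ lies in the image of $(\phi_\mathsf T)_q$. But Proposition \ref{surj_locfree} identifies this image with $\mathfrak t_q^\perp$, forcing $\mu\in\mathfrak t_q^\perp$. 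This contradicts the standing hypothesis on $\mu$ from Theorem \ref{surjthm1} that $\mu\notin\mathfrak t_r^\perp$ for any positive-dimensional stabilizer subtorus $\mathsf T_r$. Hence $q\in X_0$, so $p_i$ has a convergent subsequence in $X_0$, and $\alpha_\mu^{-1}(K)$ is compact.

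There is no real obstacle here: the proof is a direct $\mathsf T$-equivariant dualization of the argument already used for Theorem \ref{toinf} and Proposition \ref{pproper}, combined with the trivial fact that $\alpha_\mu(X_0)\subset \phi_\mathsf T^{-1}(\mu)$. The only point deserving attention is to be explicit about the standing assumption on $\mu$ that makes Theorem \ref{surjthm1} available; everything else is a formal consequence.
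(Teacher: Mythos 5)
Your proof is correct and is exactly the argument the paper intends: the paper states this proposition without proof, as a corollary of the preceding results, and your $\mathsf T$-equivariant version of the limit argument from Theorem \ref{toinf}, combined with the identification of the image of $(\phi_{\mathsf T})_q$ with $\mathfrak t_q^\perp$ from Proposition \ref{surj_locfree}, supplies precisely the missing details, including the correct observation that the standing hypothesis of Theorem \ref{surjthm1} on $\mu$ is what makes the contradiction work. One small remark: with codomain literally $T^*X_0$ the statement is automatic for any continuous section (the preimage of a compact $K$ is closed in $X_0$ and contained in the compact set $\pi(K)\subset X_0$); the real content, which your argument establishes, is properness into $T^*X$, i.e. that $\alpha_\mu(p)$ escapes to infinity as $p$ approaches $X\setminus X_0$, which is how the proposition is actually used in \S 3.
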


\section{The Schr\"odinger operator}

As in \S 2 we will equip $T^*X$ with a $\mathsf G$-invariant inner product. Now, however, we will use the inner product to define a $\mathsf G$-invariant Riemannian metric on $X$ and denote by
\begin{equation}
\Delta: C^\infty(X) \to C^\infty(X)
\end{equation}
the associated Laplacian. In addition, given a potential function, $V: X \to \mathbb R$, we get from $\Delta$ and $V$ a semi-classical Schr\"odinger operator
\begin{equation}
\hbar^2 \Delta +V
\end{equation}
which is elliptic, self-adjoint and, thanks to the factor $\hbar^2$, is semi-classically a differential operator of order zero with leading symbol
\begin{equation}\label{pxxi}
\mathsf p(x, \xi) = \langle \xi, \xi\rangle_x +V(x).
\end{equation}

Next recall that for $\mu \in \mathrm{Int}(\mathfrak t_+^*)$ and $p \in X_0$ the level set, $\phi^{-1}(\mu)$, of the moment map, (\ref{Gmom_map}), intersects $T_p^*X$ in the set
\[
K_p +\alpha_\mu(p),
\]
where $K_p$ is the kernel of the map (\ref{mom_res}) (i.e. is a linear subspace of $T_p^*X$). Hence the minimum value of $\mathsf p|_{T_p^*X}$ is just
\begin{equation}\label{alphaplusV}
\langle \alpha_\mu(p), \alpha_\mu(p) \rangle_p+V(p),
\end{equation}
the function
\begin{equation}
p\in X_0 \mapsto \mathsf p_\mu(p):=\langle \alpha_\mu(p), \alpha_\mu(p) \rangle_p+V(p),
\end{equation}
being the ``effective potential of the Schr\"odinger operator restricted to the space, $L^2_\mu(X)$." (See \cite{AM}, \S 4.5). By Proposition \ref{alphaproper}, this function is proper and tends to $+\infty$ as $p$ tends to the boundary of $X_0$, and hence its minimum value,
\[F(\mu) = \min_{p \in X_0} \mathsf p_\mu(p),\]
is well-defined. Moreover, since the spectral measure, (\ref{firstInv}), is supported on the interval, $[c_\mu, +\infty)$ and $c_\mu$ is, by (\ref{pxxi}), equal to $F(\mu)$, the function, $F$, is a spectral invariant of the Schr\"odinger operator.

Let's now assume that the subset, $X_1$, of $X$ on which the action of $\mathsf G$ is locally free is a dense subset of $X_0$, and denote by $Y$ and $Z$ the quotients, $X_0/\mathsf T$ and $X_1/\mathsf G$, and by $Y_1$ the open dense subset $X_1/\mathsf T$ of $Y$. Then since the first summand of (\ref{alphaplusV}) is $\mathsf T$-invariant, it is the pull-back to $X_0$ of a $C^\infty$ function, $W(y, \mu)$ on $Y_1$, and since the second summand is $\mathsf G$-invariant, it is the pull back to $X_1$ of a function on $Y_1$ of the form, $\gamma^*V_{red}$, where $V_{red}$ is a function on $Z$ and $\gamma$ is the projection of $Y_1$ onto $Z$.

Now let $\mathcal U$ be the subset of $\mu$'s in $\mathrm{Int}(\mathfrak t^*_+)$ having the property that (\ref{alphaplusV}) takes its minimum value in $X_1$. Then, for generic choices of $V_{red}$, $\mathcal U$ is an open dense subset of $\mathrm{Int}(\mathfrak t^*_+)$ and $F(\mu)$ can also be thought of as the minimum value of the function
\begin{equation}\label{yWgammaV}
y \in Y \mapsto W(y, \mu) + \gamma^* V_{red}.
\end{equation}

The question we want to explore in the rest of this paper is to what extend $V_{red}$ is determined by the spectral invariants, $c_\mu$, and hence by the function, $F$. To answer this question we'll begin by exploring some relations between these two functions: suppose the function (\ref{yWgammaV}) has a unique minimum point, $y_0 \in Y_1$ and that in addition, $y_0$ is a non-degenerate minimum. Then there exists a neighborhood, $\mathcal U_0$ of $\mu_0$ in $\mathcal U$ such that for every $\mu \in \mathcal U_0$ the function (\ref{yWgammaV}) has a unique minimum at $y =f(\mu)$, and such that the map, $\mu \in \mathcal U_0 \to f(\mu)$, is smooth. Moreover at $y=f(\mu)$,
\begin{equation}\label{3.7.1}
\frac{\partial}{\partial y} \left( W(y, \mu)+\gamma^* V_{red}(y) \right)=0
\end{equation}
and since $F(\mu)=W(f(\mu), \mu)+\gamma^* V_{red}(f(\mu))$, we have
\begin{equation}\label{3.7.2}
\frac{\partial}{\partial \mu}F(\mu) =\frac{\partial}{\partial y} \left( W+\gamma^* V_{red} \right)(f(\mu), \mu)\frac{\partial f}{\partial \mu} +\frac{\partial}{\partial \mu}W(f(\mu), \mu).
\end{equation}
Therefore (\ref{3.7.1}) and (\ref{3.7.2}) imply the identities
\begin{equation}\label{3.8.1}
\frac{\partial W}{\partial y}(y, \mu)=-\frac{\partial}{\partial y}\gamma^* V_{red}(y)
\end{equation}
and
\begin{equation}\label{3.8.2}
\frac{\partial W}{\partial \mu}(y, \mu)=\frac{\partial}{\partial \mu}F(\mu)
\end{equation}
at $y=f(\mu)$. However, the condition ``$y=f(\mu)$" is already implicitly implied by (\ref{3.8.1}), and $W$ depends neither on $V_{red}$ nor on $F$, so these identities should in principle enable us to read off information about $V_{red}$ from equivariant spectral data and, in particular, give one some hope of actually determining $V_{red}$ if the space $\mathcal U_0 \subset \mathrm{Int}(\mathfrak t^*_+)$ on which $F$ is defined has the same dimension as the space, $X_1/\mathsf G$, on which $V_{red}$ is defined. More explicitly let
\begin{equation}
\Gamma_Z \subset T^*Y \times T^*Z
\end{equation}
be the canonical relation defined by stipulating that $((y, \xi), (z, \eta))$ is in $\Gamma_Z$ if and only if
\begin{equation}
z=\gamma(y) \quad \mbox{and} \quad -\xi=(d\gamma)_y^* \eta.
\end{equation}
Similarly for $\mathcal U_0 \subset \subset \mathrm{Int}(\mathfrak t^*_+)$, let
\begin{equation}
\Gamma_W \subset T^*\mathcal U_0 \times T^*Y
\end{equation}
be the canonical relation defined by stipulating that $((\mu,\tau), (y, \xi))$ is in $\Gamma_W$ if and only if
\begin{equation}
\tau= \frac{\partial W}{\partial \mu} (y, \mu) \quad \mbox{and} \quad \xi=-\frac{\partial W}{\partial y} (y, \mu).
\end{equation}
In addition, suppose that these canonical relations are transversally composable, i.e. that the composite canonical relation
\begin{equation}\label{Gammaiscomp}
\Gamma =\Gamma_Z \circ \Gamma_W
\end{equation}
is well-defined. Then if $Z$ and $\mathfrak t^*$ have the same dimension, i.e. if
\begin{equation}\label{X=G+T}
\dim X = \dim \mathsf G + \dim \mathsf T,
\footnote{If $X$ is a $\mathsf G$-manifold with $\dim X/\mathsf G < \dim \mathsf T$, then one may pick a subtorus $\mathsf T_{\!1}$ of $\mathsf T$ with $\dim \mathsf T_{\!1} =\dim \mathsf T - \dim X/\mathsf G$ and pick a $T_{\!1}$-toric manifold $Y$, so that the natural $\mathsf G \times \mathsf T_{\!1}$-action on $X \times Y$ satisfies the dimension assumption.}
\end{equation}
it makes sense to ask whether the canonical relation $\Gamma$ is actually the graph of a canonical transformation
\begin{equation}
\Phi: T^*\mathcal U_0 \to T^*Z,
\end{equation}
or at least that a local version of this assertion is true in the neighborhood of a point $(\mu_0, z_0)$ in $\mathcal U_0 \times Z$. This would then imply by (\ref{3.8.1}) and (\ref{3.8.2}) that the graph of $-dV_{red}$ is the image of the graph of $dF$ with respect to this transformation and hence that $V$ is spectrally determined up to an additive constant (or that a local version of this assertion is true in a neighborhood of $z_0$). We will examine this question in more detail in the next section. More explicitly, we will describe conditions on the function, $W(y, \mu)$, and the fibration, $Y \to Z$, which guarantee that $\Gamma_Z$ and $\Gamma_W$ are transversally composable and that their composition is the graph of a canonical transformation.

\section{The generalized Legendre transform}

Let $Y$ be an $n+k$ dimensional manifold, $Z$ and $ U$ be $n$ dimensional manifolds and $\pi: Y \to Z$ a fibration. Given $W(y, \mu) \in C^\infty(Y \times \mathcal U)$ we would like to understand the composite canonical relation, $\Gamma_\pi \circ \Gamma_W$, where $\Gamma_\pi \subset T^*Y \times T^*Z$ is the canonical relation
\begin{equation}
(z, \xi, y, \eta) \in \Gamma_\pi \  \mbox{iff} \ z=\pi(y) \ \mbox{and} \ \eta = (d\pi)_y^*\xi
\end{equation}
and $\Gamma_W \subset T^*\mathcal U \times T^*Y$ is the canonical relation
\begin{equation}
(y, \eta, \mu, \nu) \in \Gamma_W\  \mbox{iff} \ \eta=-\frac{\partial W}{\partial y}(y,\mu) \ \mbox{and} \ \nu = \frac{\partial W}{\partial \mu}(y,\mu)
\end{equation}

To do so we will first fix some notation. For every $y \in Y$ let $F_y$ be the fiber of $\pi$ containing $y$ and let $T^*_{vert}Y$ be the vector bundle whose fiber at $y \in Y$ is the cotangent space of $F_y$ at $y$. Given a function $\rho \in C^\infty(Y)$ we will define its fiber derivative, $d_{fiber}\rho$, to be the section of $T^*_{vert}Y$ which at $y \in Y$ takes the value
\begin{equation}
(d\iota_F^*\rho)(y),
\end{equation}
$\iota_F$ being the inclusion map, $F_y \to Y$.

Now let's fix $\mu \in \mathcal U$ and for the moment regard $W(y, \mu)$ as a function $W_\mu(y)$ on $Y$. We will make the assumption

\begin{minipage}{0.1\linewidth}
(I)\end{minipage}
\begin{minipage}{0.8\linewidth}
For every $\mu \in \mathcal U$, the section $d_{fiber}W_\mu$ of $T^*_{vert}Y$ intersects the zero section of $T^*_{vert}Y$ transversally.
\end{minipage}

To see what this condition means in coordinates let $y=(z,v)$ be a $\pi$-adapted coordinate system on $Y$, i.e. let $z=(z_1, \cdots, z_n)$ be a coordinate system on $Z$ and for $z$ fixed, let $v=(v_1, \cdots, v_k)$ be a coordinate system on the fiber above $z$. Then in coordinates it's easy to see that condition (I) reduces to the condition that the $k \times k$ matrix
\begin{equation}\label{Wmatrix}
\frac{\partial^2 W_\mu}{\partial v_i \partial v_j} (z,v), \quad 1 \le i, j \le k
\end{equation}
be non-degenerate at points where $\frac{\partial }{\partial v}W_\mu(z,v)=0$, in other words that at such points the mapping
\begin{equation}
v \mapsto  \frac{\partial  }{\partial v}W_\mu(z,v)
\end{equation}
be, for $z$ fixed, locally a diffeomorphism.

One implication of condition (I) is that the set
\begin{equation}
Z_\mu = \{y \in Y\ |\ d_{fiber}W_\mu(y)=0\}
\end{equation}
is a submanifold of $Y$ of dimension $n$ and that the projection
\begin{equation}\label{4.7}
\pi: Z_\mu \to Z
\end{equation}
is locally a diffeomorphism. We will now strengthen this assumption by assuming

\begin{minipage}{0.1\linewidth}
	(II)\end{minipage}
\begin{minipage}{0.8\linewidth}
	The projection $ \pi : Z_\mu \rightarrow Z $ is a covering map
\end{minipage}

\vspace{5pt}
Thus in particular if $ Z $ is simply connected this condition implies
\vspace{5pt}

\begin{minipage}{0.1\linewidth}
	$ (\mathrm{III}) $\end{minipage}
\begin{minipage}{0.8\linewidth}
	Each connected component of $ Z_\mu $ is mapped diffeomorphically onto $ Z $ by the map \eqref{4.7}
\end{minipage}

We will now show that the conditions (I)--(III) are satisfied by the function (\ref{yWgammaV}), at generic points of $Y_1$. To do so we will begin by giving an alternative description of this function: As in \S 1 let $\mathsf G$ be a compact connected Lie group, $\mathsf G \times X \to X$ an effective action of $\mathsf G$ on $X$, $\langle \cdot, \cdot \rangle$ a $\mathsf G$-invariant inner product on $T^*X$ and $X_1$ the open subset of $X$ on which the action of $\mathsf G$ is locally free. Then for $p \in X_1$ one has an injective linear mapping
\[
v \in \mathfrak g \to v_X(p) \in T_pX
\]
and a dual moment mapping
\begin{equation}\label{413}
\phi_p: T_p^*X  \to \mathfrak g^*.
\end{equation}
For $\mu \in \mathfrak g^*$ let $\alpha_\mu(p)$ be the unique element of $(\mathrm{ker}\phi_p)^\perp$ that gets mapped by (\ref{413}) onto $\mu$ and let $\alpha_\mu \in \Omega^1(X_1)$ be the one form, $p \in X_1 \mapsto \alpha_\mu(p)$. By $\mathsf G$-equivariance the map
\begin{equation}\label{414}
\mu \in \mathfrak g^* \to \alpha_\mu \in \Omega^1(X_1)
\end{equation}
intertwines the coadjoint action of $\mathsf G$ on $\mathfrak g^*$ and the action $g \mapsto \tau_g^*$ of $\mathsf G$ on $\Omega^1(X_1)$.

Consider now the function
\begin{equation}
W(x, \mu)  = \langle \alpha_\mu(x), \alpha_\mu(x)\rangle_x
\end{equation}
(This is a slight variant of the function $W(y, \mu)$ in display (\ref{yWgammaV}) since it is defined on $X_1 \times \mathfrak g^*$ rather than on $X_0/T \times \mathfrak t^*$.) By (\ref{414}) this function has the equivariant property
\begin{equation}\label{Equi416}
W(gx, \mu) = W(x, \mathrm{Ad}(g)^*\mu\rangle
\end{equation}
and we will examine the non-degeneracy of the matrix  (\ref{Wmatrix}) using this more equivariant description of $W$. In (\ref{Wmatrix}) the $z_i$'s are, for a point $p_0 \in X_1$, coordinates on a neighborhood of the image point in $X_1/\mathsf G$ and the $v_i$'s are coordinates on the fiber of the fibration, $Y_1 \to Z_1$, above this point. Note, however, that for $\mu \in \mathrm{Int}(\mathfrak t_+^*)$ the stabilizer of $\mu$ in $\mathsf G$ with respect to the $\mathrm{Ad}^*$ action of $\mathsf G$ on $\mathfrak g^*$ is $\mathsf T$ and the $\mathsf G$ orbit through $\mu$ is just the coadjoint orbit, $\mathcal O$, through $\mu$ in $\mathfrak g^*$.

Hence by the equivariance property \eqref{414} the non-degeneracy condition (I) can be reformulated as follows: Let $ B $ be the quadratic form on $ \frakg^* $ associated with the inner product $ \langle\, , \, \rangle_p $ on $ T_p X $ via the bijective linear map
\[ \alpha_\mu (\phi) \in (\operatorname{ker} \phi_p)^\perp \to \mu  \]
defined by \eqref{414} and let $ \rho = B \vert_{\mathcal O}. $ Then condition (I) is equivalent to ``For every $ \mu \in {\mathcal O}$ at which $ d\rho_\mu = 0,\ \ (d^2 \rho)_\mu$ is non-degenerate.''

We will prove in an appendix to this paper that for generic choices of $B$ this condition is satisfied for all generic coadjoint orbits of $\mathsf G, $ i.e. orbits of the form, $\mathsf G \cdot \mu, \ \mu \in \mathfrak t^*_+ \, . $

Turning to the condition (II) and (III) we note that if we assume that the action of $\mathsf G$ on $X_1$ is free rather than just locally free, $Y_1$ and $Z_1$ are the quotient manifolds $X_1/\mathsf T$ and $X_1/\mathsf G$ and in particular $Y_1$ is a fiber bundle over $Z_1$ with fiber $\mathcal O=\mathsf G/\mathsf T$.

Hence by the equivariance property (\ref{Equi416}) of $W(x, \mu)$, these conditions are satisfied as well.

\section{Inverse results}

Coming back to the canonical relation \eqref{Gammaiscomp} we note that in view of the computations in Section 4 this canonical relation has a finite number of connected components, among them a minimal component which relates the Lagrangian manifolds in $T^*Z$ and $T^*\mathcal U$ defined by the graphs of $dV_{red}$ and $dF(\mu)$. Hence, as we explained in Section 3 this potentially gives us an inverse spectral result that determine $V_{red}$, up to an additive constant, from spectral data of the Schr\"odinger operator $\hbar^2 \Delta + V$. We will now describe some assumptions that will enable us to prove this is the case.

Recall from Section 2 that the canonical relation we just alluded is the canonical relation
\[\Gamma \subset T^*(Z \times \mathcal U)\]
defined by the graph of the map
\begin{equation}
(z, \mu) \in Z \times \mathcal U \to dW_\mu(z)
\end{equation}
and to prove the inverse spectral result described above by the method of Section 3 we will need to show that for generically chosen $\mathsf G$-invariant Riemannian metric on $X$ this is the graph of a symplectomorphism. We have already shown in Section 3 that for a generically chosen $\mathsf G$-invariant Riemannian metric on $X$ conditions (I) and (II) are satisfied and we will show below tat this stronger result is true.

The key ingredient in the proof is the following alternative description of the function $W(y, \mu)$ on $Y \times \mathfrak t_+^*$. Fixing an $x_0 \in X_1$  let $y_0$ and $z_0$ be the projections of $x_0$ onto $Y$ and $Z$. Then via the map $g \mapsto g \cdot x_0$, the fiber of $X_1$ above $z_0$ can be identified with $\mathsf G$ and the fiber above $z_0$ in $Y$ with $\mathsf G/\mathsf T$.  Mover, the fiber above $z_0$ in $Y$ can, via the map
\[
\mathrm{Ad}^*: \mathsf G \times \mathfrak t^* \to \mathfrak g^*, \quad (g, \mu) \mapsto \mathrm{Ad}(g)^* \mu,
\]
be identified with the coadjoint orbit $\mathsf G \cdot \mu = \mathcal O \subset \mathfrak g^*$. Moreover the Riemannian metric on $X$ defines a positive definite bilinear form on $T^*_{x_0}(\pi^{-1}(z_0))$ and hence, via the identification above, a positive definite bilinear form, $B$, on $\mathfrak g^*$, and the restriction of this bilinear form to $\mathcal O$ is, via the identification above, just the function $\langle \alpha_\mu, \alpha_\mu\rangle$ restricted to the fiber in $Y$ above $z_0$. Thus the computation of the critical values of the function
\[
y \in Y \mapsto W(y, \mu)
\]
on the fiber of $Y$ above $z$ can be reduced to the computation of the critical value of this function.

A particularly interesting example of a bilinear function on $\mathfrak g^*$ is the Killing form $C: \mathfrak g^* \to \mathbb R$ which has the property that it is a positive definite quadratic form on $\mathfrak g^*$ and is $\mathsf G$-invariant, i.e., is constant on coadjoint orbits. Hence in the alternative description of the function $W(y, \mu)$ that we have just provided we can, if so minded, replace the $B$ in this alternative definition of $W(y, \mu)$ by $B+\lambda C$, where $\lambda$ is a constant, and, in fact, in a small neighborhood of $z_0$ we can make this constant a function $\rho(z, \mu)$, where $z=\pi(z)$ and $\mu \in \mathfrak t_+^*$.

Let us now assume as in Section 4 that $B$ is a Morse function on the coadjoint orbit
\[
\mathcal O=\mathsf G\cdot \mu, \quad \mu \in \mathfrak t^*_+
\]
having distinct critical values and as above let $\widetilde W(z, \mu)$ be the minimal value of the function, $W(y, \mu)$ defined by this $B$. Then, replacing $B$ by $B+\lambda C$, $\widetilde W(z, \mu)$ gets replaced by $\widetilde W(z, \mu)+f(z, \mu)$, where
\begin{equation}
\label{e5.2}
f(z, \mu) = \rho(z, \mu) C(\mu, \mu).
\end{equation}

Therefore, the implication of this observation is that by perturbing the Riemannian metric that we used to define the Schr\"odinger operator, $\hbar^2\Delta +V$, we can, on a neighborhood of $z_0$, convert the function $\widetilde W(z, \mu)$ to a function
\begin{equation}\label{e5.3}
\widetilde W(z, \mu) +f(z, \mu),
\end{equation}
where $f(z, \mu)$ is  a more or less arbitrary function of $(z, \mu)$ on this neighborhood. In particular we can perturb $\widetilde W(z, \mu)$ so that, locally on this neighborhood the matrix
\begin{equation}\label{e5.4}
\frac{\partial^2\widetilde W(z, \mu)}{\partial z_i\partial \mu_j}, \qquad 1 \le i, j \le n
\end{equation}
is non-degenerate, i.e. locally near $(z_0, \mu)$, the canonical relation
\begin{equation}
\Gamma_W = \mathrm{graph}\ dW \subset T^*(Z \times \mathcal U)
\end{equation}
is the graph of a symplectomorphism
\[\gamma: T^*\mathcal U \to T^*Z\]
mapping the graph of $dF$ onto the graph of $dV_{red}$ (in other words determining $V_{red}$, up to an additive constant, from the spectral data supplied by $F(\mu)$.)

\section{Killing metrics}
We will define a metric on the fiber bundle, $ Y_1 \rightarrow Z_1, $ to be a \emph{Killing metric} if its restriction to the fibers
\[ \mathcal{O}_\mu, \ \mu \in t^\ast_+ \]
of the fibration are of the form, $ \rho (z, \mu) \mathcal{C}_\mu $ where $ \mathcal{C}_\mu $ is the Killing form on $ \mathcal{O}_\mu $ and $ \rho (z, \mu) $ a $ C^\infty $ function on $ Z \times t^\ast_+ $ (i.e. looks like the perturbative term \eqref{e5.2} in the expression \eqref{e5.3}.) For metrics of this form the non-degeneracy condition \eqref{e5.4} reduces to the condition that for all $ (z, \mu) $ the matrix
\begin{equation}\label{e6.1}
\frac{\partial}{\partial z_i} \frac{\partial}{\partial \mu_j} \rho (z, \mu), \quad 1 \leq i,j \leq \mu
\end{equation}
be non-degenerate, and in a projected sequel to this paper we will use the techniques developed above to prove inverse spectral results for some interesting examples of $ \mathsf{G}$-manifolds with the property that they are homogeneous spaces for the actions of a Lie group containing $ \mathsf{G} $.  We will confine ourselves here however to illustrating how this can be done if one takes $ X $ to be $ \mathbb{C}P^2 $ (viewed as a degenerate coadjoint orbit of $ S \mathcal{U}(3) $) and takes $ \mathsf{G}$ to be $ S \mathcal{U} (2). $ To do so we will make use of the following elementary result

\begin{lemma}\label{lemma}
Let $Z_1$ and $Z_2$ be subintervals of the real line, $f_1$ and $f_2$ functions on $Z_1$ and $Z_2$, $\pi_i: T^*Z_i \to Z_i$ the cotangent projection and $\Gamma \subset T^*X_1 \times T^*X_2$ a canonical relation having the property that
\begin{equation}\label{star}
\Gamma_{df_1} =\Gamma \circ \Gamma_{df_2}.
\end{equation}
In addition suppose $\frac{df_1}{dx_1}(x_1)$ and $\frac{df_2}{dx_2}(x_2)$ are strictly positive. Then $\Gamma$ is the graph of a symplectomorphism.
\end{lemma}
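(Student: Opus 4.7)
My plan is to show that both cotangent-bundle projections $\pi_i : \Gamma \to T^*Z_i$ are diffeomorphisms; then $\Gamma = \mathrm{graph}(\pi_1 \circ \pi_2^{-1})$, and the Lagrangian condition forces this map to be a symplectomorphism.

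The first step will be to work near a point $q_0 \in \Gamma \cap (T^*Z_1 \times \Gamma_{df_2})$ and describe $\Gamma$ by a local generating function $S(x_1,x_2)$, so that $\Gamma$ is locally $\{\xi_1 = \partial_1 S,\ \xi_2 = -\partial_2 S\}$; in these coordinates, $\Gamma$ is the graph of a symplectomorphism precisely when the mixed partial $\partial_1\partial_2 S$ is nonvanishing. Setting $H(x_1,x_2) = S(x_1,x_2) + f_2(x_2)$, the composition locus corresponds to $\{\partial_2 H = 0\}$, and the equality $\Gamma_{df_1} = \Gamma \circ \Gamma_{df_2}$ (interpreted as an equality of canonical relations, so that the composition is transversally composable) identifies the image of this locus in $T^*Z_1$ with $\Gamma_{df_1}$ via $(x_1,x_2) \mapsto (x_1, \partial_1 S(x_1,x_2)) = (x_1, f_1'(x_1))$.

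The second step will be to exploit the positivity hypothesis. Because $f_i' > 0$ on $Z_i$, each $\Gamma_{df_i}$ is a strictly monotone graph over $Z_i$, and the induced correspondence $x_2 \leftrightarrow x_1$ between the two composition-locus parameters is a diffeomorphism of intervals. Differentiating the identities $\partial_2 S(x_1,x_2^*) + f_2'(x_2^*) = 0$ and $\partial_1 S(x_1,x_2^*) = f_1'(x_1)$ along the locus, and invoking the transversality built into the equality of canonical relations, I would conclude that $\partial_1\partial_2 S \ne 0$ along the locus, so that $\Gamma$ is locally the graph of a symplectomorphism near each point where it meets $T^*Z_1 \times \Gamma_{df_2}$. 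Connectedness of the intervals $Z_i$, together with the monotonicity coming from positivity, should then patch these local graphs into a single global symplectomorphism $T^*Z_2 \to T^*Z_1$.

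The hardest part, I expect, is excluding spurious behavior of $\Gamma$ away from the composition locus: the hypothesis directly constrains $\Gamma$ only along a $1$-dimensional curve, so a priori there could be a disjoint $2$-dimensional piece of $\Gamma$ projecting away from $\Gamma_{df_2}$ that the set-theoretic equality does not see. I would handle this either by reading the equality as one of canonical relations (with the multiplicity that this entails), or by an explicit connectedness argument using the 1D structure and the monotonicity forced by $f_i'>0$ to prove that any fold of $\pi_2$ would reverse the orientation induced on $\Gamma_{df_1}$, contradicting $f_1'>0$.
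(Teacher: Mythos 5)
Your route is genuinely different from the paper's, and the place you flag as ``the hardest part'' is in fact where the argument breaks down. The paper does not analyze the composition locus at all: it reads \eqref{star} as the statement that the one-form $\pi_1^*df_1-\pi_2^*df_2$ vanishes identically on $\Gamma$, so that $f_1\circ\pi_1-f_2\circ\pi_2$ is constant on $\Gamma$; since $f_i'>0$ one may use $f_i$ as the coordinate on $Z_i$, after which $\Gamma$ lies in the coisotropic $\{x_1=x_2\}$ and is forced to be the identity relation. That hypothesis constrains all of $\Gamma$. Your reading constrains $\Gamma$ only along the one-dimensional set $\Gamma\cap(T^*Z_1\times\Gamma_{df_2})$, and under that reading the conclusion is actually false, so no amount of care in your second step can succeed. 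Concretely, take $Z_1=Z_2=\mathbb{R}$, $f_1(x)=f_2(x)=x$, and let $\Gamma=\{\xi_1=\partial_1 S,\ \xi_2=-\partial_2 S\}$ with $S(x_1,x_2)=x_1-x_2+(x_2-x_1^2)^2$. Then $\xi_2=1$ forces $x_2=x_1^2$, on that locus $\xi_1=1=f_1'(x_1)$, the composition is transversal (because $\partial_2^2S+f_2''=2\neq 0$), and $\Gamma\circ\Gamma_{df_2}=\Gamma_{df_1}$ bijectively; yet $\partial_1\partial_2S=-4x_1$ vanishes at the origin and the projection $\Gamma\to T^*Z_2$ is two-to-one, so $\Gamma$ is not the graph of a symplectomorphism. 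Transversal composability only yields $(\partial_1\partial_2S,\ \partial_2^2S+f_2'')\neq(0,0)$ along the locus, not $\partial_1\partial_2S\neq 0$, and your proposed deduction of the latter from the former plus monotonicity is exactly the step this example defeats.

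There is also a structural problem with your first step: a generating function of the form $S(x_1,x_2)$ exists near a point of $\Gamma$ only when the base projection $\Gamma\to Z_1\times Z_2$ is a local diffeomorphism there, and this fails precisely for the Lagrangians you expect to end up with --- graphs of cotangent lifts of diffeomorphisms $Z_2\to Z_1$, including the identity relation --- whose base projection lands in a curve. You would need a mixed generating function such as $S(x_1,\xi_2)$ to cover those points. The repair for the whole argument is to adopt the paper's stronger reading of \eqref{star} as an identity of one-forms on all of $\Gamma$ (which is what the geometry of Sections 3--5 actually supplies, since the $\Gamma$ being composed there is the graph of $(z,\mu)\mapsto dW_\mu(z)$); once you do, the short coordinate argument above replaces the generating-function analysis entirely, and your local/patching machinery is not needed.
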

\begin{proof}
Without loss of generality we can assume $f_1$ and $f_2$ are the coordinate functions on $X_1$ and $X_2$ and interpret (\ref{star})  to mean that
\[ \pi_1^* df_1 - \pi^*_2 df_2\]
restricted to $\Gamma$ vanishes. Then $f_1-f_2$ is constant on $\Gamma$, so without loss of generality we can assume $f_1=f_2$ on $\Gamma$, i.e. since $f_1$ and $f_2$ are the coordinate functions on $X_1$ and $X_2$, that $X_1=X_2$, and that
\[(x_1, \xi_1, x_2, \xi_2) \in \Gamma  \Longleftrightarrow x_1=x_2 \ \mbox{and} \ \xi_1=dx_1=\xi_2=dx_2,\]
i.e. we can conclude that $\Gamma$ has to be the identity map.
\end{proof}

To apply this result to the action of $ S \mathcal{U} (2) $ on the space $ \mathbb{C}P^2 $ we must first specify what we mean by this action. This is defined by thinking of $ S \mathcal{U} (2) $ as the subgroup of $ S \mathcal{U} (3) $ consisting of linear mappings which fix the vector $ (0,0,1) $ and then taking the action of $ S \mathcal{U} (2) $ on $ \mathbb{C}P^2 $ to be the quotient action on $ (\mathbb{C}^3-0) / \mathbb{C}-0 $. This is not a free action since it fixes the image in $ \mathbb{C}P^2 $ of the vector $ (0,0,1); $ but it does act freely on the complement $ (\mathbb{C}P^2)_1 $ of this point and the quotient by this action is just $ (0, \infty) $. Thus in terms of this notation we have:
\begin{equation}\label{e6.3}
(\mathbb{C}P^2)_1 / S \mathcal{U}(2) = (0, \infty)
\end{equation}
and
\begin{equation}\label{e6.4}
t^\ast_+ = (0, \infty)
\end{equation}
so we can think of these spaces as copies of the interval $ (0,8) $ and the functions
\begin{enumerate}
	\item[(I)] $ f(\mu, z) = \langle \alpha_\mu (z), \alpha_\mu (z) \rangle + V(z) $ \\
	and
	\item[(II)] $ \tilde{f} (\mu, z) = \langle \alpha_\mu (z), \alpha_\mu (z) \rangle  $
\end{enumerate}
\noindent as functions on the product, $ (0, \infty) \times (0, \infty), $ of these intervals.

The function (I) is a bounded perturbation of the function (II) and hence for $ \mu $ large
\[ F(\mu) = \underset{z}{\min} f(\mu, z) \]
is a bounded perturbation of the function
\[ \begin{aligned}
\tilde{F} (\mu) & = \underset{z}{\min} \langle \alpha_\mu(z), \alpha_\mu (z) \rangle \\
& = \mu^2  \underset{z}{\min} \langle \alpha_1 (z), \alpha_1 (z) \rangle \\
& = \tilde{C} \mu^2
\end{aligned} \]
Hence for $ \mu  $ large
\[ \frac{\partial F}{\partial \mu } (\mu) \neq 0 \, . \]

We will now assume by hypothesis that the potential function, $ V_{red} (z), $ is strictly increasing and hence by Lemma \ref{lemma} that the canonical relation defined by \eqref{Gammaiscomp} is the graph of a symplectomorphism, i.e. that the spectral invariant, $ F(\mu) $ determines the potential function $ V_{red} (z). $

\begin{remark}
The argument above applies, mutatis mutandis, to lots of other examples besides the $ \mathbb{CP}^2 $ example above, i.e. to all examples for which the space, $ Z = X/G, $ is one dimensional. (Another interesting example of such a space is the generic coadjoint orbit of $ SO(4) $ viewed as an $ SO(3) $ manifold.)
\end{remark}

\appendix

\section{The genericity of admissible metrics}

We will prove below the genericity result for quadratic forms, $ B, $ that we cited in \S 4.

Let $\mathcal B$ be the set of all positive definite quadratic forms on $\mathfrak g^*$. We are interested in $B \in \mathcal B$ such that
\begin{center}
\begin{minipage}{0.05\linewidth}
$\mathrm{(*)}$
\end{minipage}
\begin{minipage}{0.8\linewidth}
for every coadjoint orbit $\mathcal O_\mu = \mathsf G\cdot \mu$, $\mu \in \mathfrak t_+^*$, the function
\begin{equation*}
\rho_B^\mu=B|_{\mathcal O_\mu}: \mathcal O_\mu \to \mathbb R
\end{equation*}
is a Morse function.
\end{minipage}
\end{center}

\begin{theorem}
The set of positive definite quadratic functions $B: \mathfrak g^* \to \mathbb R$ such that $\mathrm{(*)}$ holds is dense and open in $\mathcal B$.
\end{theorem}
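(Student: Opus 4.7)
Since $B(t\xi) = t^2 B(\xi)$ and $\calO_{t\mu} = t\calO_\mu$ for $t>0$, the diffeomorphism $\xi \mapsto t\xi$ pulls $B|_{\calO_{t\mu}}$ back to $t^2 B|_{\calO_\mu}$, so the Morse property is preserved and $(*)$ reduces to Morse-ness of $B|_{\calO_\mu}$ for $\mu$ in the compact slice $K=\mathfrak{t}_+^{*}\cap S$ (with $S$ a fixed reference unit sphere in $\mathfrak{t}^{*}$). Using the parametrization $\Psi:\mathsf G/\mathsf T\times K\to \frakg^{*}$, $(gT,\mu)\mapsto \mathrm{Ad}^{*}(g)\mu$, the universal degeneracy locus
\[
\calN = \{(B,\mu,gT)\in \calB\times K\times \mathsf G/\mathsf T : gT \text{ is a degenerate critical point of } B|_{\calO_\mu}\}
\]
is closed (the critical-point condition and the vanishing-Hessian-determinant condition are polynomial in $B$ and continuous in $(\mu,gT)$), and compactness of $K\times \mathsf G/\mathsf T$ then gives that $\pi_{\calB}(\calN)$ is closed in $\calB$, yielding openness of the set of $B$ satisfying $(*)$.

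\textbf{Density for fixed $\mu$.} For each fixed $\mu \in K$, the set $\calB_\mu^{\mathrm{Morse}}=\{B: B|_{\calO_\mu}\text{ is Morse}\}$ is dense in $\calB$. This is a standard Sard--Smale argument: the evaluation map
\[
\Phi_\mu: \mathsf G/\mathsf T \times \calB \to T^{*}(\mathsf G/\mathsf T),\quad (gT,B)\mapsto d_{gT}(\Psi(\cdot,\mu)^{*}B)
\]
is transverse to the zero section because, at any fixed point $gT_0$, varying $B$ over all symmetric bilinear forms freely prescribes the first-order behaviour of $\Psi(\cdot,\mu)^{*}B$ at $gT_0$ (the map $\delta B \mapsto \delta B(\xi_0,\cdot)|_{T_{\xi_0}\calO_\mu}$ from $S^2\frakg$ to $T_{\xi_0}^{*}\calO_\mu$ is surjective for $\xi_0 = \mathrm{Ad}^{*}(g_0)\mu\neq 0$). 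Sard's theorem applied to the projection $\Phi_\mu^{-1}(\text{zero section})\to\calB$ then yields the density of $\calB_\mu^{\mathrm{Morse}}$.

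\textbf{Simultaneous density and the main obstacle.} The substantive step is to upgrade this to simultaneous density over all $\mu\in K$: since $\calN$ is semi-algebraic and $K\times \mathsf G/\mathsf T$ is compact, $\pi_{\calB}(\calN)$ is a closed subanalytic subset of $\calB$, and we need to show it is nowhere dense. A naive dimension count gives $\dim\pi_{\calB}(\calN)\le \dim\calB + \mathrm{rank}(\mathsf G)-2$, which automatically forces nowhere-denseness only when $\mathrm{rank}(\mathsf G)\le 1$. For higher rank the plan is to exhibit one explicit $B_0 \in \calB\setminus \pi_{\calB}(\calN)$, and then to argue, using the real-algebraic structure of the defining conditions, that an open subset of bad $B$'s would—after semi-algebraic selection of a bad $\mu(B)$—produce an open subset of $\calB$ failing $(*)$ at some fixed $\mu_{*}$, contradicting the density established for fixed $\mu$. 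The explicit $B_0$ I would construct as $B_0 = D + \varepsilon B'$, where $D$ is a $\mathsf T$-invariant quadratic form with generic, pairwise distinct weight eigenvalues on the weight-space decomposition $\frakg = \mathfrak t \oplus \bigoplus_\alpha V_\alpha$, and $B'$ is a small, generic, non-$\mathsf T$-invariant perturbation chosen to split the Morse--Bott components of $D|_{\calO_\mu}$ (whose critical set always contains the $W$-orbit $\{w\mu\}$, and may carry additional positive-dimensional $\mathsf T$-orbit components) into isolated non-degenerate critical points. The hard part is the simultaneous verification of non-degeneracy of the perturbed Hessian at every critical point as $\mu$ ranges over the compact $K$, which requires a careful weight-decomposition analysis of each Hessian block together with a uniform smallness estimate on $\varepsilon$ obtained from the compactness of $K$.
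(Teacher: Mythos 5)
Your openness argument (the scaling reduction $B|_{\calO_{t\mu}} \leftrightarrow t^2 B|_{\calO_\mu}$ to a compact slice $K$, plus closedness of the degeneracy locus and compactness of its fibres) and your fixed-$\mu$ density argument are both correct, and the former is in fact more careful than what the paper records. The genuine gap is in your third step: the simultaneous density over all $\mu$ \emph{is} the content of the theorem, and you leave it as a plan whose self-declared ``hard part'' (uniform non-degeneracy of the perturbed Hessians of $B_0=D+\varepsilon B'$ at every critical point as $\mu$ ranges over $K$) is never carried out; as written this is not a proof. The idea you are missing, and the one the paper uses, is not to fix $\mu$ and then try to patch over $K$ afterwards, but to run the parametric transversality argument once on the whole family: form the total space $Y\to\mathfrak t^*_+$ with fibre $\calO_\mu$, the bundle $W\to Y$ with fibre $T^*_z\calO_\mu$, and the single parametrized section $\tilde\rho(\mu,z,B)=\bigl(d(B|_{\calO_\mu})\bigr)_z$. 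The same surjectivity of $\delta B\mapsto \delta B(\xi_0,\cdot)|_{T_{\xi_0}\calO_\mu}$ that you invoke for fixed $\mu$ shows $\tilde\rho$ is transverse to the zero section $\iota(Y)\subset W$, and one application of Thom's theorem then yields a dense (and, combined with your compactness argument, open) set of $B$ for which the section $s_B:Y\to W$ is transverse to $\iota(Y)$ --- a single genericity statement covering all $\mu$ simultaneously, with no explicit construction and no semialgebraic selection needed.

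One caveat, which shows your instinct about where the difficulty sits is sound: transversality of the total section $s_B$ gives that the universal critical locus $\calC_B=s_B^{-1}(\iota(Y))$ is a manifold of dimension $\operatorname{rank}(\mathsf G)$, whereas the Morse property of $B|_{\calO_\mu}$ at a given $\mu$ is the stronger statement that $T\calC_B$ meets the fibre direction $T_z\calO_\mu$ trivially, i.e.\ that $\calC_B\to\mathfrak t^*_+$ is an immersion. The paper passes over this implication quickly, and for $\operatorname{rank}(\mathsf G)\ge 2$ it requires exactly the extra care that your dimension count $\dim\pi_{\calB}(\calN)\le\dim\calB+\operatorname{rank}(\mathsf G)-2$ flags. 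But that refinement is best handled inside the same transversality framework (e.g.\ a jet-transversality argument on the same total space), not by the explicit $B_0$ construction you sketch, which in any case remains incomplete.
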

\begin{proof}
Let $Y \to \mathfrak t_+^*$ be the fiber bundle over $\mathfrak t_+^*$ with fiber $\mathcal O_\mu$ over $\mu$, and let $W$ be the fiber bundle over $\mathfrak t_+^*$ with fiber $T^*\mathcal O_\mu$ over $\mu$. Then, via the map $T^*\mathcal O_\mu \to \mathcal O_\mu$, $W$ can also be thought of as a fiber bundle over $Y$. Moreover the map
\[\iota: Y \to W\]
mapping $\mathcal O_\mu$ onto the zero section of $T^*\mathcal O_\mu$ gives one an embedding of $Y$ into $W$.
We define a map $\tilde \rho: Y \times \mathcal B \to W$ as follows: for any $y=(\mu, z) \in Y$, where $z \in \mathcal O_\mu$, we let
\[
\tilde \rho (y, B) = (\mu, z, (d\rho_B^\mu)_z) \in W.
\]
It is easy to see that $\tilde \rho$ intersects the embedded image of $Y$ in $W$ transversally, i.e. for any $(\mu, z, B)$ such that $(d\rho_B^\mu)_z=0$, one has
\[
\mathrm{Im}(d\tilde \rho)_{\mu, z, B} + T_{\mu, z, 0}Y=T_{\mu, z, 0}W,
\]
which follows from the fact that for any given $(\mu, z) \in Y$, the map
\[
\phi_{\mu, z}: \mathcal B \to T^*_z\mathcal O_\mu, \quad B \mapsto (d\rho_B^\mu)_z
\]
is always surjective.

 Hence by the transversality theorem of Thom, the set of
 $B \in \mathcal B$ such that the map
\[\rho_B: Y \to   Y \times \mathcal B \to W\]
is transverse to the embedded image, $\iota(Y)$ of $Y$ in $W$, is dense and open. The latter implies that the map
\[d\rho_B^\mu: \mathcal O_\mu \to T^*\mathcal O_\mu\]
intersects the zero section of $T^*\mathcal O_\mu$ transversally, in other words,  $\rho_B^\mu|_{\mathcal O_\mu}$ is a Morse function on $\mathcal O_\mu$ for all $\mu \in \mathfrak t_+^*$.
\end{proof}


\end{document}